\documentclass[11pt,amsmath,amsthm,amsfonts,amssymb,amscd]{article}
\usepackage{epsfig}
\usepackage[latin1]{inputenc}
\usepackage{amsthm}
\usepackage{amsmath}
\usepackage{graphicx}
\usepackage{amssymb}
\usepackage{amscd}

\setlength{\textwidth}{15.2cm} \setlength{\textheight}{19cm}
\setlength\oddsidemargin{.5in} \setlength\evensidemargin{1.5in}
\font \Bbbten=msbm10 \font \Bbbsev=msbm7 \font \Bbbfiv=msbm5
\newfam\Bbbfam \def \Bbb{\fam\Bbbfam\Bbbten}\textfont\Bbbfam = \Bbbten
\scriptfont\Bbbfam=\Bbbsev \scriptscriptfont\Bbbfam=\Bbbfiv
\newcommand{\N}{\mbox{$I\!\!N$}}
\newcommand{\Z}{\mbox{$Z\!\!\!Z$}}

\newcommand{\R}{\mbox{$I\!\!R$}}

\newcommand{\T}{{\Bbb T}}

\newcommand{\dist}{{\rm dist}}

\newcommand{\cita}[7]{{\sc #1, }{\it #2, }{\small #3, {\bf #4 } (#5), p.
#6-#7.}}
\newcommand{\cit}[5]{{\sc #1, }{\it #2, }{\small #3, { #4 } #5.}}

\theoremstyle{plain}
\newtheorem{thm}{Theorem}[section]
\newtheorem{rk}[thm]{Remark}

\newtheorem{Df}{Definition}[section]
\newtheorem{Teo}{Theorem}[section]
\newtheorem{Lem}[Teo]{Lemma}

\newtheorem{Obs}[Teo]{Remark}

\title{ Lyapunov exponents for expansive homeomorphisms}
\author{
M. J. Pacifico\footnote{partially supported by
FAPERJ},
J. L. Vieitez\footnote{partially supported by Grupo de Investigaci\'on "Sistemas Din\'amicos" CSIC (Universidad de la
Rep\'ublica), SNI-ANII, PEDECIBA, Uruguay}}
\date{\today}

\begin{document}
\maketitle
\begin{abstract}
Let $(M,\rho)$ be a compact metric space and $f:M\to M$ an expansive homeomorphism.
We define Lyapunov exponents $\Lambda(f,\mu)_{max}$ and $\lambda(f,\mu)_{min}$ for an $f$-invariant measure $\mu$.
When $\Lambda(f,\mu)_{max} > 0$ and $\lambda(f,\mu)_{min} < 0$
can be interpreted as a weak form of hyperbolicity for $f$.
We prove that if $M$ is a Peano space then there is $\gamma>0$ such that
$\Lambda(f,\mu)_{max}>\gamma$ and $\lambda(f,\mu)_{min}<-\gamma$.
We also show that the hypothesis that $M$ is a Peano space is necessary to obtain the maximal Lyapunov exponent positive and the minimal Lyapunov exponent negative. Moreover we define Lyapunov exponents for $K$, a compact $f$-invariant subset of $M$ and prove that if the maximal Lyapunov exponent of $K$ is negative then $K$ is an attractor.
When $f$ is a diffeomorphism on a compact manifold, these Lyapunov exponents coincide with the usual ones.
\end{abstract}

\section{Introduction}

In the study of differentiable dynamics an indication of chaos is given by the so called Lyapunov exponents or characteristic exponents.
 Their use in Physics was initially based on the following considerations which in fact goes in the opposite direction: trying to ensure stability of motions.
 Let the differential equation $\dot x =F(x)$ define an autonomous dynamical system where $F:\Omega\subset \R^n\to\R^n$ is $C^1$ and $\Omega$ is open.
For $x_0\in\Omega$ consider the solution $\varphi(t,x_0)$ of the initial value problem
\begin{equation} \label{eq1}
\left\{\begin{array}{c}
\dot x=F(x) \\
x(0)=x_0
\end{array}\right.
\end{equation}
 Assume that all solutions of (\ref{eq1}) with initial condition $x_1$ in a neighborhood of $x_0$ do exist for $t\in [0,+\infty)$.
 An experimenter will probably have an error in the measurements for initial data slightly altered and the initial data will be
$ x_1=x_0+y$ instead of $x_0$ where $y$ is the error in the measurement that is supposed small.
 The dynamical behavior of the nearby solution can be described approximately by the linearization of $\dot x=F(x)$, that is, by the linear system of differential equations  $\dot y=DF_x(\varphi(t,x_0))y$ where $\varphi(t,x_0)$ is supposed to be the ''correct'' solution. If for all small $y$ the solution $\widetilde \varphi(t,y)$ of the system $\dot y=DF_x(\varphi(t,x_0))y$ tends to zero when $t\to+\infty$  then this is seen as an indication of (asymptotic) stability of the motion.
 A way to capture this is given by the limit $\chi_{x_0}(y)=\lim_{t\to+\infty}\frac{1}{t}\log(\|\widetilde \varphi(t,y)\|)$ whenever this limit exists. In this case, this limit gives information about exponential convergence (if $\chi_{x_0}(y)<0$ for all $y$ small) or divergence (total instability if $\chi_{x_0}(y)>0$ for all $y$ small) of trajectories with respect to the initial data problem.
 If the limit does not exist we instead can consider the $\limsup$ if we want to capture by this means any kind of exponential divergence.

 In the discrete case, i.e., $t=n\in\Z$, when a $C^1$-dynamical system is given by a differentiable map $f:M\to M$ where $M$ is a compact smooth manifold, the Lyapunov exponent is given  for $x\in M$ and $v\in T_xM$ by $\chi(x,v)= \limsup_{n\to\infty}\frac{1}{n}\log(\|Df^n_x(v)\|)$. Here $v$ takes the place of the ''error'' $y$ via the inverse of the exponential map $\exp_x:T_xM\to M$.

\medbreak

  One problem with this approach is that in various situations we cannot assume that the system given by $f$ is differentiable and therefore the computations roughly described above have no sense.
  Moreover in several cases an experimenter has a collection of data  indicating  that the map $f$ is continuous and even differentiable, but has not enough data to obtain an approximation of the  differential map $Df$.
  So it seems of interest to introduce some kind of Lyapunov exponents for the case of a continuous dynamical system. This has been done by Barreira and Silva (\cite{BS}) for continuous maps $f:\R^n\to\R^n$, and by Kifer  (\cite{Kif}) for the case $f:X\to X$ where $X$ is a compact metric space.
  We will address the problem of defining Lyapunov exponents for an expansive homeomorphism $f$ on a compact metric space $(X,\dist)$ using similar techniques as those developed in \cite{BS,Kif}. Under certain conditions about the topology of the space $X$ where $f$ acts we obtain that the Lyapunov exponents are different from zero, indicating that $f$ presents a chaotic dynamics.

\medbreak


 \section{Lyapunov exponents for expansive homeomorphisms.}

Let $f:M\to M$ be a homeomorphism defined on a compact metric space $(M,\dist)$.
Following \cite{Kif} we define maximal and minimal Lyapunov exponents with respect to the distance $\dist:M\times M\to \R$ for a homeomorphism $f$.
Assume $M$ has no isolated points.

Let $N\in\N$ and define $$B^*_x(\delta,N)=\{y\in M\setminus \{x\}\,:\, \dist(f^j(x),f^j(y))\leq\delta,\, \forall\, j=0,1,\ldots,N\}\, .$$
If $N<0$ define $B^*_x(\delta,N)=\{y\in M\setminus \{x\}\,:\, \dist(f^j(x),f^j(y))\leq\delta,\, \forall\, j=N,N+1,\ldots,-1,0\}$.
For $n\in\Z$, $\delta>0$ and $x\in M$  define
$$A_\delta(x,n)=\sup_{y\in B^*_x(\delta,n)} \left\{\frac{\dist(f^n(x),f^n(y))}{\dist(x,y)}\right\}$$
and
$$a_\delta(x,n)=\inf_{y\in B^*_x(\delta,n)} \left\{\frac{\dist(f^n(x),f^n(y))}{\dist(x,y)}\right\}\, .$$

\begin{rk} Note that $A_\delta(x,n)$ and $a_\delta(x,n)$ can be interpreted as the maximal, respectively the minimal  distortion of $f$ on $B^*_x(\delta,N)$.
\end{rk}

Let $\mu$ be a Borel $f$-invariant probability measure and assume that there is $\varepsilon_0>0$ such that for all $0<\delta<\varepsilon_0$ it holds that

\begin{equation} \label{condition}
\sup_{n\in\Z\backslash \{0\}}\frac{1}{|n|}\int_M|\log(A_\delta(x,n))|\mu(dx)<\infty
\end{equation}
\begin{equation*}
\left|\inf_{n\in\Z\backslash \{0\}}\frac{1}{|n|}\int_M|\log(a_\delta(x,n))|\mu(dx)\right|<\infty\, .
\end{equation*}

In this case we define
$$\Lambda^+_\delta(x)=\limsup_{n\to+\infty}\frac{1}{n}\log (A_\delta(x,n))\; \mbox{ and }\;
\lambda^+_\delta(x)=\limsup_{n\to+\infty}\frac{1}{n}\log (a_\delta(x,n))$$
and for $n<0$
$$\Lambda^-_\delta(x)=-\limsup_{n\to-\infty}\frac{1}{n}\log (A_\delta(x,n))\; \mbox{ and }\;
\lambda^-_\delta(x)=-\limsup_{n\to-\infty}\frac{1}{n}\log (a_\delta(x,n))\, .$$

The following result is proved in \cite[Theorem 1]{Kif}.

\begin{Teo} \label{Kifer}
For $x\in M$ $\mu$ a.e. it holds that
the limits

$$\Lambda^+_\delta(x)=\lim_{n\to+\infty}\frac{1}{n}\log (A_\delta(x,n))\, , \quad
\lambda^+_\delta(x)=\lim_{n\to+\infty}\frac{1}{n}\log (a_\delta(x,n)),$$

$$\Lambda^-_\delta(x)=-\lim_{n\to-\infty}\frac{1}{n}\log (A_\delta(x,n))\, , \quad
 \lambda^-_\delta(x)=-\lim_{n\to-\infty}\frac{1}{n}\log (a_\delta(x,n))$$
do exist.
Moreover, $\Lambda^+_\delta(x)=-\lambda^-_\delta(x)$ and $\lambda^+_\delta(x)=-\Lambda^-_\delta(x)$
and $\Lambda^+_\delta(x)$ and $\lambda^+_\delta(x)$ are $f$-invariant $\mu$ a.e. . Similarly for $\Lambda^-_\delta(x)$ and $\lambda^-_\delta(x)$.
\end{Teo}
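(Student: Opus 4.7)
The plan is to deduce everything from Kingman's subadditive ergodic theorem applied to $f$ and (for the negative-time statements) to $f^{-1}$. The two key inputs are a multiplicative cocycle inequality for the quantities $A_\delta$ and $a_\delta$, together with algebraic identities relating negative-time to positive-time quantities.

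First I would check the cocycle inequalities. For $n,m\ge 0$ and $y\in B^*_x(\delta,n+m)$, the telescoping
\[
\frac{\dist(f^{n+m}x,f^{n+m}y)}{\dist(x,y)}=\frac{\dist(f^{n+m}x,f^{n+m}y)}{\dist(f^nx,f^ny)}\cdot\frac{\dist(f^nx,f^ny)}{\dist(x,y)}
\]
together with $B^*_x(\delta,n+m)\subset B^*_x(\delta,n)$ and $f^n(y)\in B^*_{f^nx}(\delta,m)$ gives, after taking sup (resp.\ inf),
\[
A_\delta(x,n+m)\le A_\delta(x,n)\,A_\delta(f^nx,m),\qquad a_\delta(x,n+m)\ge a_\delta(x,n)\,a_\delta(f^nx,m).
\]
Setting $\varphi_n:=\log A_\delta(\cdot,n)$ and $\psi_n:=\log a_\delta(\cdot,n)$, the sequence $\varphi_n$ is subadditive and $\psi_n$ is superadditive over $f$. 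The integrability hypothesis (\ref{condition}) is exactly the $L^1$-boundedness needed in Kingman's theorem, so it yields that the limits
\[
\Lambda^+_\delta(x)=\lim_{n\to+\infty}\frac{1}{n}\varphi_n(x),\qquad \lambda^+_\delta(x)=\lim_{n\to+\infty}\frac{1}{n}\psi_n(x)
\]
exist $\mu$-a.e., belong to $L^1(\mu)$, and are $f$-invariant $\mu$-a.e.

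For the negative-time part I would use the algebraic identities obtained by the substitution $y'=f^{-m}y$, $x'=f^{-m}x$ in the defining sup/inf, which turn each ratio into the reciprocal of the other kind of ratio along the orbit piece $(x',y'),\dots,(f^mx',f^my')$:
\[
A_\delta(x,-m)=\frac{1}{a_\delta(f^{-m}x,m)},\qquad a_\delta(x,-m)=\frac{1}{A_\delta(f^{-m}x,m)}\qquad (m\ge 0).
\]
Applying Kingman's theorem with $T=f^{-1}$ to the analogous cocycles in the backward direction gives the $\mu$-a.e.\ existence of $\Lambda^-_\delta(x)$ and $\lambda^-_\delta(x)$ and their $f$-invariance. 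Together with the two displayed identities and the already established $f$-invariance of $\Lambda^+_\delta$ and $\lambda^+_\delta$, one then reads off $\Lambda^+_\delta=-\lambda^-_\delta$ and $\lambda^+_\delta=-\Lambda^-_\delta$ as $\mu$-a.e.\ equalities of $f$-invariant measurable functions.

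The step I expect to be most delicate is the last matching of limits, namely justifying that
\[
\lim_{m\to+\infty}\frac{1}{m}\log A_\delta(f^{-m}x,m)=\Lambda^+_\delta(x)\quad\mu\text{-a.e.},
\]
despite the base point $f^{-m}x$ moving with $m$. The clean route is to avoid a direct uniform-convergence argument and instead apply Kingman twice independently (once on each side), obtaining that \emph{both} sides are a.e.\ defined $f$-invariant functions; the identity above forces their $\mu$-a.e.\ equality via the $f$-invariance of $\mu$. The remaining manipulations are bookkeeping on the definitions of $B^*_x(\delta,n)$ for $n<0$.
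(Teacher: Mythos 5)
Your proposal is correct and is essentially the same route as the paper's: the paper gives no proof of this statement, deferring entirely to \cite[Theorem 1]{Kif}, and Kifer's proof is precisely your argument---the cocycle inequality $A_\delta(x,n+m)\le A_\delta(x,n)\,A_\delta(f^n x,m)$, Kingman's subadditive ergodic theorem with integrability supplied by condition (\ref{condition}), and the reciprocal identities $a_\delta(x,n)=1/A_\delta(f^n x,-n)$ to pass to negative times. The one detail you should make explicit in your final step is that equality of the two $f$-invariant limit functions follows from applying the integral identity on every $f$-invariant subset (equivalently, via ergodic decomposition), since $f$-invariance of $\mu$ by itself only yields equality of the integrals over all of $M$, which does not force $\mu$-a.e.\ equality.
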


Since we are assuming that (\ref{condition}) is valid and $A_\delta(x,n)$ decreases when $\delta$ decreases to zero the limit
$\Lambda^+(x)=\lim_{\delta\to 0}\Lambda^+_\delta(x) \mbox{ exists}\, .$
Analogously since $a_\delta(x,n)$ increases when $\delta$ decreases the limit
$\lambda^+(x)=\lim_{\delta\to 0}\lambda^+_\delta(x) \, \mbox{ exists}\, .$
Similarly, for $\mu$ a.e., there exist $\Lambda^-(x)$ and $\lambda^-(x)$.
Thus we introduce the following definition
\begin{Df}
We define the Lyapunov exponents for $f$ at $x\in M$ by
$$\Lambda^+(x)=\lim_{\delta\to 0}\Lambda^+_\delta(x),\,\; \lambda^+(x)=\lim_{\delta\to 0}\lambda^+_\delta(x)$$
and similarly for $\Lambda^-(x)$ and $\lambda^-(x)$. As proved above these quantities exist $\mu$ a.e. and are $f$-invariant.
\end{Df}

 Next we compute these Lyapunov exponents for  an expansive homeomorphism. To do so,
 let us recall that a homeomorphism $f:X\to X$, $X$ a compact metric space, is expansive
if there exists $\alpha>0$ such that for all $x,y\in X$ if $x\neq y$
then there is $n\in \Z$ such that $\dist(f^n(x),f^n(y))> \alpha$.
We will obtain those Lyapunov exponents with respect to a hyperbolic metric adapted to the expansive homeomorphism, given by \cite[Theorem 5.1]{Ft}:
\begin{Teo} \label{Fathi}
Let  $f:M\to M$ be an
expansive homeomorphism of the compact metric space $(M,\dist)$. Then there exists a metric
$d:M\times M\to\R$ on $M$, defining the same topology as $\dist$, and numbers $k > 1$, $\varepsilon_0 > 0$ such that:
$$\forall x, y \in M, \max\{d(f(x),f(y)), d(f^{-1}(x),f^{-1}(y))\}\geq \min \{kd(x,y),\varepsilon_0\}\, .$$
Moreover, both $f$ and $f^{-1}$ are Lipschitz for $d$.
\end{Teo}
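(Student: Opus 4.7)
The strategy is to rescale the given distance along finite $f$-orbits so that the expansive behavior of $f$ is converted into uniform expansion in the new metric on scales below $\varepsilon_0$.

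\textbf{Step 1 (key compactness lemma).} The essential consequence of expansivity that I would extract first is: if $\alpha > 0$ is an expansivity constant for $f$, then for every prescribed $\lambda > 1$ there exists $N = N(\lambda)\in\N$ with
$$\dist(f^j x, f^j y)\leq\alpha\ \text{for all } |j|\leq N\ \Longrightarrow\ \dist(x,y)\leq \alpha/\lambda\, .$$
The proof is by compactness and contradiction: a failing sequence of pairs $(x_m,y_m)$ with $\dist(x_m,y_m)>\alpha/\lambda$ but staying $\alpha$-close on the window $[-m,m]$ sub-converges, by compactness of $M$, to a pair $x\neq y$ whose entire $f$-orbit remains $\alpha$-close, violating expansivity.

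\textbf{Step 2 (construction and basic properties).} Fix $k>1$, let $\varepsilon_0<\alpha$, and put $\dist^*(x,y):=\min(\dist(x,y),\varepsilon_0)$, which is itself a metric. The natural candidate is
$$d(x,y):=\sup_{n\in\Z} k^{-|n|}\,\dist^*(f^n x, f^n y)\, ,$$
which is symmetric, non-negative, and vanishes only on the diagonal. The triangle inequality follows from the elementary bound $\sup_n k^{-|n|}(a_n+b_n)\leq \sup_n k^{-|n|}a_n+\sup_n k^{-|n|}b_n$ applied to $\dist^*$. Topological equivalence of $d$ and $\dist$ follows from $\dist^*\leq d$ together with uniform continuity of each iterate $f^n$ and the exponential cutoff in $n$, which forces $d(x,y)\to 0$ whenever $\dist(x,y)\to 0$. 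The Lipschitz property of $f$ and $f^{-1}$ for $d$ is immediate from a direct index-shift computation, since shifting the index changes each weight by at most the factor $k$.

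\textbf{Step 3 (hyperbolic inequality and main obstacle).} Let $n^*$ (approximately) realize the sup in $d(x,y)$. If $n^*>0$, the re-indexing $d(fx,fy)\geq k^{-(n^*-1)}\dist^*(f^{n^*}x,f^{n^*}y)=k\,d(x,y)$ gives the required expansion in one step; symmetrically for $n^*<0$ one uses $d(f^{-1}x,f^{-1}y)$. The only real work is in the degenerate case $n^*=0$, i.e.\ $d(x,y)=\dist^*(x,y)$: Step 1 furnishes some $|j|\leq N$ with $\dist^*(f^j x, f^j y)=\varepsilon_0$, which then contributes to $d(f^{\pm 1}x, f^{\pm 1}y)$. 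The delicate point, which I expect to be the technical heart of the proof, is to calibrate $k$, $N$ and $\varepsilon_0$ so that this contribution is at least $\min\{k\,d(x,y),\varepsilon_0\}$ and is not dampened too much by its own weight $k^{-|j|}$. The natural fix is to iterate the key lemma and reconstruct $d$ via a Frink-type procedure from the nested diagonal neighborhoods $V_n=\{(x,y):\dist(f^j x,f^j y)\leq\alpha\ \forall\,|j|\leq N_n\}$, choosing the $N_n$ so that the telescoping absorbs the damping; the output is then \emph{some} $k>1$ and $\varepsilon_0>0$ satisfying the theorem.
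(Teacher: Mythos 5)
Your proposal cannot be compared with a proof in the paper, because the paper contains none: this statement is Fathi's theorem, quoted verbatim from [Ft, Theorem 5.1], so the only meaningful comparison is with Fathi's own argument --- which is precisely the Frink-type metrization you gesture at in your final sentence, not the weighted-sup construction that Steps 2--3 actually develop. Within your proposal, Step 1 (uniform expansivity) is correct and is indeed an essential ingredient, and Step 2 is correct as far as it goes: $d(x,y)=\sup_{n\in\Z}k^{-|n|}\dist^*(f^nx,f^ny)$ is a metric equivalent to $\dist$ for which $f$ and $f^{-1}$ are Lipschitz.

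The genuine gap is the case $n^*=0$ of Step 3, and it is not a calibration problem that a cleverer choice of $k$, $N$, $\varepsilon_0$ can repair: for an unfavourable starting metric $\dist$ the construction fails for \emph{every} $k>1$ and $\varepsilon_0>0$. Concretely, take $M=\{0,1\}^{\Z}$, $f$ the shift, and the ultrametric $\dist(x,y)=1/(1+j(x,y))$ where $j(x,y)=\min\{|i|\,:\,x_i\neq y_i\}$; this induces the product topology and the shift is expansive for it. Let $x,y$ differ at exactly one coordinate $n\gg 0$, so that $\dist(f^mx,f^my)=1/(1+|n-m|)$. For $n$ large, every term with $m\neq 0$ loses to the term $1/(1+n)$ at $m=0$ (terms with $m<0$ are obviously smaller; terms with $1\leq m\leq n-1$ are at most $k^{-m}/(1+n-m)$, whose maximum over that range is attained at the endpoints and is $<1/(1+n)$ once $n>1/(k-1)$; terms with $m\geq n$ are at most $k^{-n}\varepsilon_0$). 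Hence $d(x,y)=1/(1+n)$, and the same computation gives $d(fx,fy)=1/n$ and $d(f^{-1}x,f^{-1}y)=1/(n+2)$. Then $\max\{d(fx,fy),d(f^{-1}x,f^{-1}y)\}=1/n$, which is strictly less than $\min\{kd(x,y),\varepsilon_0\}=k/(1+n)$ as soon as $n>1/(k-1)$: the hyperbolic inequality fails. The cause is the point you half-identify and then understate: for pairs whose supremum sits at $m=0$, expansivity produces a separation time that is \emph{not} bounded by any fixed $N$ --- your appeal to Step 1 is invalid there, since Step 1 bounds the separation time only of pairs at definite distance $>\alpha/\lambda$, whereas here the separation time is $n$ while $\dist(x,y)=1/(1+n)$ --- so the damping $k^{-|j|}$ always wins. (With the standard shift metric $2^{-j(x,y)}$ your construction does work for $1<k<2$; the point is that a proof must handle every expansive system with every compatible metric.) Consequently the hyperbolic metric cannot be obtained by reweighting $\dist$ along orbits; it must be rebuilt from scratch, which is what Fathi does: uniform expansivity with one \emph{fixed} gap $N$ yields diagonal neighborhoods $V_n$ (orbit-closeness over windows of length $nN$) satisfying $V_{n+1}\circ V_{n+1}\circ V_{n+1}\subset V_n$, Frink's metrization lemma then produces a metric $D$ with $V_n\subset\{D<2^{-n}\}\subset V_{n-1}$, in which distances double within $N$ iterates of $f$ or $f^{-1}$, and a further argument converts this doubling into the one-step inequality with $k$ of the order $2^{1/N}$ together with the Lipschitz property. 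None of this is carried out in your proposal beyond the single closing sentence, and that is where the whole content of the theorem lies.
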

\begin{Obs}
The existence of an expansive homeomorphism on $M$ implies that the topological dimension of $M$ is finite, see\cite{Ma2}.
\end{Obs}
To define $\Lambda^{\pm}(x)$ and $\lambda^{\pm}(x)$ for $x\in M$, we need to show that condition (\ref{condition}) is fulfilled. To this end, we first verify the following

\begin{Lem}\label{previous}
Let $\mu$ be a Borel probability measure invariant by $f:M\to M$.
 If $f$ is expansive and $d$ is the distance defined by Theorem \ref{Fathi} then
 $$\sup_{n\in\Z\backslash\{0\}}\frac{1}{|n|}\int_M|\log(A_\delta(x,n))|\mu(dx)<\infty\, $$ and $$\left|\inf_{n\in\Z\backslash\{0\}}\frac{1}{|n|}\int_M|\log(a_\delta(x,n))|\mu(dx)\right|<\infty.$$
\end{Lem}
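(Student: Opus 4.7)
The plan is to exploit directly the bi-Lipschitz character of $f$ with respect to the Fathi metric $d$ given by Theorem \ref{Fathi}. Since the integrability condition (\ref{condition}) is a uniform bound on quantities of the form $\frac{1}{|n|}|\log(\cdot)|$, it suffices to show that $A_\delta(x,n)$ and $a_\delta(x,n)$ are squeezed between $L^{-|n|}$ and $L^{|n|}$ for a single constant $L\geq 1$ independent of $x$, $n$, and $\delta$; then $\mu$-integration and division by $|n|$ are immediate.

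First I would invoke the last sentence of Theorem \ref{Fathi}: there exists $L\geq 1$ (assume $L\geq 1$ by replacing it with $\max\{L,1\}$) such that both $f$ and $f^{-1}$ are $L$-Lipschitz with respect to $d$. Iterating, for every $n\in\Z$ and every $x,y\in M$ one obtains
\[
L^{-|n|}\,d(x,y)\;\leq\;d(f^n(x),f^n(y))\;\leq\;L^{|n|}\,d(x,y).
\]
Dividing by $d(x,y)$ (note that $y\in B^*_x(\delta,n)$ implies $y\neq x$, so $d(x,y)>0$) and taking $\sup$ and $\inf$ over $y\in B^*_x(\delta,n)$ gives, pointwise in $x$,
\[
L^{-|n|}\;\leq\;a_\delta(x,n)\;\leq\;A_\delta(x,n)\;\leq\;L^{|n|}.
\]

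Second, taking logarithms in the previous display yields $|\log A_\delta(x,n)|\leq |n|\log L$ and $|\log a_\delta(x,n)|\leq |n|\log L$ for every $x\in M$ and every $n\in\Z\setminus\{0\}$. Integrating against the probability measure $\mu$ and dividing by $|n|$ produces the uniform bound
\[
\frac{1}{|n|}\int_M|\log A_\delta(x,n)|\,\mu(dx)\;\leq\;\log L,
\]
and the same bound for $a_\delta$, which establishes both finiteness claims in the statement.

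There is no serious obstacle here: the lemma is essentially a corollary of Fathi's theorem, and the point is only to observe that the bi-Lipschitz property, once iterated, automatically controls the distortion quantities $A_\delta(x,n)$ and $a_\delta(x,n)$ by $L^{\pm|n|}$ independently of the base point and of $\delta<\varepsilon_0$. The one detail worth being explicit about is the harmless normalization $L\geq 1$, which is needed so that $L^{|n|}$ and $L^{-|n|}$ genuinely bracket a neighborhood of $1$ and the absolute value of the logarithm is controlled on both sides.
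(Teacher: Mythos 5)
Your proof is correct, and its first half --- iterating the Lipschitz bounds for $f$ and $f^{-1}$ from Theorem \ref{Fathi} to get $L^{-|n|}\leq d(f^n(x),f^n(y))/d(x,y)\leq L^{|n|}$ --- is exactly the paper's argument. Where you diverge is in the treatment of $a_\delta(x,n)$: the paper does not bound it directly, but instead uses the duality identity $a_\delta(x,n)=1/A_\delta(f^n(x),-n)$ together with the $f$-invariance of $\mu$ (a change of variables in the integral) to reduce the second inequality to the first. You observe instead that the two-sided Lipschitz estimate already squeezes $a_\delta(x,n)$ between $L^{-|n|}$ and $L^{|n|}$ pointwise, so $|\log a_\delta(x,n)|\leq |n|\log L$ with no appeal to invariance at all. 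Your route is slightly more elementary and proves a marginally stronger statement: the conclusion holds for any Borel probability measure, invariant or not. A further small merit of your write-up: the paper records only the upper bound $A_\delta(x,n)\leq K^{|n|}$ before asserting a bound on $|\log(A_\delta(x,n))|$, which strictly speaking also requires the lower bound $A_\delta(x,n)\geq K^{-|n|}$ (the absolute value of the logarithm must be controlled on both sides); your explicit bracketing $L^{-|n|}\leq a_\delta(x,n)\leq A_\delta(x,n)\leq L^{|n|}$ supplies this cleanly, using that $B^*_x(\delta,n)$ is nonempty because $M$ has no isolated points.
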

\begin{proof}
By Theorem \ref{Fathi} $f$ and $f^{-1}$ are Lipschitz with respect to the metric $d$, i.e., there is a constant $K> 1$ such that
$$\forall x,y \in M:\!x\neq y,\quad \frac{d(f(x),f(y))}{d(x,y)}\leq K\mbox{ and }
\frac{d(f^{-1}(x),f^{-1}(y))}{d(x,y)}\leq K\, .$$
From the last inequality it follows that $\forall \, x,y\in \!M, \, x\neq \!y, \frac{d(f(x),f(y)}{d(x,y)}\geq \frac{1}{K}$.

\noindent
Thus, $\sup_{x,y\in M, x\neq y}\frac{d(f^n(x),f^n(y))}{d(x,y)}\leq K^{|n|}$ for all $n\in\Z$. Hence $\log(|A_\delta(x,n)|)\leq |n|\log(K)$ for all $\delta>0$, $x\in M$ and $n\in\Z$. Therefore
$$ \sup_{n\in\Z\backslash\{0\}}\frac{1}{|n|}\int_M|\log(A_\delta(x,n))|\mu(dx)<\infty\;\mbox{ and condition (}\ref{condition}\mbox{) holds} .$$

\noindent Moreover since
$$a_\delta(x,n)=\inf_{y\in B^*_x(\delta,n)} \{d(f^n(x),f^n(y))/d(x,y)\}=\left(\sup_{y\in B^*_x(\delta,n)}\{d(x,y)/d(f^n(x),f^n(y))\}\right)^{-1} =$$    $$=\frac{1}{A_\delta(f^n(x),-n)}$$
and $\mu$ is $f$-invariant we also have that
$$\left|\inf_{n\in\Z\backslash\{0\}}\frac{1}{|n|}\int_M|\log(a_\delta(x,n))|\mu(dx)\right|<\infty\, .$$
The proof is complete.
\end{proof}

Note that Lemma \ref{previous} and Theorem \ref{Kifer} imply that for any $f$-invariant measure $\mu$ the numbers $\Lambda^+(x)$, $\lambda^+(x)$, $\Lambda^-(x)$,
$\lambda^-(x)$ do exist $\mu$ a.e. and are $f$ invariant.

Recall that $M$ is a Peano space if it is connected, locally connected compact metric space. Next we give a positive lower bound of $\Lambda^+(x)$ and a negative lower bound of $\lambda^+(x)$ for an expansive homeomorphism $f:M\to M$ defined on a compact Peano space. As remarked above this can be
interpreted as a weak kind of hyperbolicity condition.

\begin{Teo}\label{Peano}
Let $(M,d)$ be a compact connected and locally connected metric space.
Let $f:M\to M$ be an expansive homeomorphism and $\gamma=\log(k)$ where
 $k>1$ is the constant given by Theorem \ref{Fathi}. Then for all $x\in M$ it holds $\Lambda^+(x)\geq\gamma$ and $\lambda^+(x)\leq-\gamma$.
\end{Teo}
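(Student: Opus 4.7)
The plan is to prove $\Lambda^+(x)\geq\gamma$ directly, and then obtain $\lambda^+(x)\leq -\gamma$ for free: by applying the same argument to $f^{-1}$ (which is expansive with the same Fathi constants $k$ and $\varepsilon_0$) one gets $\Lambda^-(x)\geq\gamma$, and Theorem \ref{Kifer} supplies the identity $\lambda^+(x)=-\Lambda^-(x)$. So fix $x\in M$ and a small $\delta$ with $0<\delta<\varepsilon_0/k$; since $\Lambda^+_\delta(x)$ is monotone in $\delta$, it suffices to prove $\Lambda^+_\delta(x)\geq\log k$. The concrete target is, for each large $n$, to exhibit $y_n\neq x$ with $d(f^j(x),f^j(y_n))\leq\delta$ for $j=0,\dots,n$, $d(f^n(x),f^n(y_n))=\delta$, and $d(x,y_n)\leq\delta/k^n$; these bounds force $A_\delta(x,n)\geq k^n$ and therefore $\Lambda^+_\delta(x)\geq\log k$.

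Step 1 is to build the relevant continuum. For each $n$ let $C_n$ denote the connected component containing $x$ of the closed set $\{y\in M:d(f^j(x),f^j(y))\leq\delta,\ j=0,\dots,n\}$. Local connectedness of $M$ together with the Lipschitz property of $f$ in the Fathi metric places a sufficiently small connected open neighborhood of $x$ inside $C_n$, so $C_n$ is a non-degenerate continuum through $x$. Now consider the continuous function $g(y):=\max_{0\leq j\leq n}d(f^j(x),f^j(y))$ on $C_n$, with $g(x)=0$. If $g<\delta$ on $C_n$, then $C_n$ would lie in the open set $\{g<\delta\}$; local connectedness would make $C_n$ open as well as closed, forcing $C_n=M$ and contradicting $\diam_d(M)>2\delta$. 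Hence some $y_n\in C_n$ satisfies $g(y_n)=\delta$.

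Step 2 identifies when the maximum is attained. Let $j^\star$ be the smallest $j\in\{0,\dots,n\}$ with $d_j:=d(f^j(x),f^j(y_n))=\delta$. If $1\leq j^\star\leq n-1$, then $d_{j^\star-1}<\delta$, while Fathi's inequality at $j^\star$ gives $\max\{d_{j^\star-1},d_{j^\star+1}\}\geq\min\{kd_{j^\star},\varepsilon_0\}=k\delta>\delta$, forcing $d_{j^\star+1}>\delta$, which contradicts $j^\star+1\leq n$. So $j^\star\in\{0,n\}$. Using the connectedness of $C_n$ and the continuous function $y\mapsto d(x,y)$ (whose image on $C_n$ is a nondegenerate interval containing $0$), choose $y_n$ inside $C_n$ with $d(x,y_n)<\delta$; this rules out $j^\star=0$ and leaves $j^\star=n$, so $d_n=\delta$ and $d_j<\delta$ for $0\leq j<n$. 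Iterating Fathi backwards from $j=n-1$ --- at each interior index the constraint is $\max\{d_{j-1},d_{j+1}\}\geq kd_j$, and since $d_j<\delta<\varepsilon_0/k$ this gives geometric decay in the backward direction from the attained maximum at $j=n$ --- we obtain $d_{n-j}\leq\delta/k^j$, and in particular $d(x,y_n)\leq\delta/k^n$. Plugging into the definition of $A_\delta(x,n)$ finishes the proof.

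The main obstacle is the backward iteration in Step 2: Fathi's inequality only furnishes $\max\{d_{j+1},d_{j-1}\}\geq kd_j$, so in isolation a single Fathi step does not rule out a profile in which $d_{j-1}$ stays comparable to $\delta$ while $d_{j+1}$ has already decayed. Turning the chain $d_{n-1}\leq\delta/k$, $d_{n-2}\leq\delta/k^2$, \dots into a rigorous induction requires exploiting the specific structure of $y_n$ --- namely that it realizes the first-escape time $n$ inside the continuum $C_n$ --- to argue that the sequence $(d_j)_{j=0}^n$ is monotone, and hence that Fathi does propagate geometric decay backwards from $d_n=\delta$. This is precisely where the Peano hypothesis is essential: connectedness and local connectedness guarantee that $C_n$ is rich enough to contain such monotone-profile points $y_n$, ruling out the oscillatory sequences that Fathi alone would permit.
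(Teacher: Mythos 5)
Your Step 1 is sound: the clopen argument (components of open sets are open in a locally connected space, so a component of the dynamical ball avoiding the level $g=\delta$ would be clopen, hence all of $M$) correctly produces $y_n\in C_n$ with $\max_{0\le j\le n} d(f^j(x),f^j(y_n))=\delta$. The proof breaks in Step 2, in two places. First, you cannot simply ``choose $y_n$ inside $C_n$ with $d(x,y_n)<\delta$'' to rule out $j^\star=0$: Step 1 gives you some escape point, and a priori every point of $C_n$ realizing $g=\delta$ could escape at time $0$ (such ``stable boundary'' points genuinely exist in the hyperbolic model); producing a point whose escape occurs at time $n$ is itself a claim needing proof. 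Second, and more seriously, even granting first escape at time $n$, the backward decay $d_{n-j}\le\delta/k^j$ does not follow from Fathi's inequality: the profile $d_j=\max\{a k^{-j},\,\delta k^{j-n}\}$ with $a$ slightly less than $\delta$ (a point with stable component of size $a$ and unstable component of size $\delta k^{-n}$) satisfies $d_j\le\delta$ for all $0\le j\le n$, has first escape at $n$, and satisfies $\max\{d_{j-1},d_{j+1}\}\ge k\,d_j$ at every index --- yet $d_0\approx a$, so $d_n/d_0\approx 1$ rather than $k^n$. Your final paragraph concedes exactly this and appeals to the Peano hypothesis making $C_n$ ``rich enough to contain monotone-profile points''; but that unproven richness statement is precisely the content of the theorem, so the argument is circular at its crux.

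The paper closes this hole with an external input you do not use: Lewowicz's result (\cite[Proposition 2.7]{Le}) that an expansive homeomorphism of a compact, connected, locally connected space has no Lyapunov stable points. From this the paper deduces that for every $x$ and every scale there is $y\neq x$ with $d(f(x),f(y))\ge k\,d(x,y)$ (otherwise a backward-orbit argument manufactures a Lyapunov stable $\alpha$-limit point), and then runs a \emph{forward} induction with Fathi's inequality: once $d_1\ge k d_0$, the inequality at step $j$ together with $d_{j-1}\le d_j/k$ forces $d_{j+1}\ge k d_j$, so expansion propagates and $A_\delta(x,n)\ge k^n$. That induction succeeds where yours fails because it carries a one-sided initial condition ($d_1\ge kd_0$) forward, whereas your backward induction from $d_n=\delta$ has no such anchor and is defeated by the V-shaped profiles above. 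To keep your purely topological route you would need a genuine ``unstable continuum'' lemma (existence in $C_n$ of escape-at-time-$n$ points with no stable part), which is substantially harder than Step 1 and is not supplied by connectedness alone. (A smaller point: your reduction of $\lambda^+(x)\le-\gamma$ via Kifer's identity $\lambda^+=-\Lambda^-$ holds only $\mu$-a.e., while the theorem asserts a bound for all $x\in M$; the symmetric geometric argument for $f^{-1}$ should be run pointwise instead.)
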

\begin{proof}
Given a point $x\in M$ there is $y\in M\backslash \{x\}$ close to $x$ such that $d(f(x),f(y))\geq kd(x,y)$ where $d(\cdot,\cdot)$ is the distance given by Theorem \ref{Fathi}.
Otherwise, by the mentioned theorem, for some $\delta>0$ and every point $y\in B(x,\delta)$ we have $d(f(x),f(y))< kd(x,y)$ and therefore
for all $y\in B(x,\delta)$ it holds that $d(f^{-1}(x),f^{-1}(y)\geq kd(x,y)$. Thus $B(f^{-1}(x),\delta)\subset f^{-1}(B(x,\delta))$.
Moreover we also have for all $y\in B(f^{-1}(x),\delta)$ that $d(f^{-2}(x),f^{-1}(y))\geq kd(f^{-1}(x),y)$. For we already know that for every point $z\in B(f^{-1}(x),\delta)$ it holds that $d(f(z),f(f^{-1}(x)))\leq \frac{1}{k}d(f^{-1}(x),z)$. By induction we obtain a sequence of balls
$B(f^{-n}(x),\delta)$ such that for all $y\in B(f^{-n}(x),\delta)$ we have $d(f^{-n-1}(x),f^{-1}(y))\geq kd(f^{-n}(x),y)$. Let $z$ be an $\alpha$-limit point of the sequence $\{f^{-n}(x)\}$. Then $z$ is a Lyapunov stable point of $f$ contradicting that there are no such points if $f:M\to M$ is expansive and $M$ is compact connected and locally connected, see \cite[Proposition 2.7]{Le}.
Hence, for every $\delta>0$ there is $y\in B(x,\delta)\backslash \{x\}$ such that $d(f(x),f(y))\geq kd(x,y)$.

Given $n>0$ let $\delta>0$ be so small that in $B^*_x(\delta,n)=\{y\in M\setminus \{x\}\,:\, d(f^j(x),f^j(y))\leq\delta\, \forall\, j=0,1,\ldots,n\}$ we have $d(f^j(x)f^j(y))\leq \varepsilon_0$ for all $j=1,2\ldots, n$ where $\varepsilon_0>0$ is given by Theorem \ref{Fathi}.
As a consequence of the previous paragraph there is a point $y\in B^*_x(\delta,n)$ such that $d(f^j(x),f^j(y)) \geq k^jd(x,y)$ for all $j=1,\ldots,n$.
 Therefore
 $$A_\delta(x,n)=\sup_{y\in B^*_x(\delta,n)} \{d(f^n(x),f^n(y))/d(x,y)\}\geq k^n,$$
implying that $\Lambda^+_\delta(x)=\lim_{n\to+\infty}\frac{1}{n}\log (A_\delta(x,n))\geq\log (k)=\gamma>0$.

 Similarly $\lambda^-_\delta(x)\leq -\log(k)=-\gamma$.
 Since this is valid for any small $\delta>0$, letting $\delta\to 0$ we obtain that $\Lambda^+(x)\geq\gamma$ and $\lambda^+(x)\leq-\gamma$ finishing the proof.

\end{proof}

\begin{Obs}
 When $f:M\to M$ is a diffeomorphism on a compact manifold these Lyapunov exponents coincides with the usual ones, see \cite{BS,Kif}.
\end{Obs}

Next we construct an example, inspired in \cite{RR}, of an expansive homeomorphism defined on a compact connected metric space exhibiting Lyapunov stable points, showing that the hypothesis of locally connectedness can not be negligible in Theorem \ref{Peano}.

\begin{Teo}\label{example} The hypotheses of local connectedness cannot be
negligible in Theorem~ \ref{Peano}.
\end{Teo}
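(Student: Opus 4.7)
The strategy is to construct a compact connected metric space $M$ which fails to be locally connected, together with an expansive homeomorphism $f:M\to M$ admitting a Lyapunov stable point $p$. The proof of Theorem \ref{Peano} used local connectedness only through the absence of Lyapunov stable points (Proposition 2.7 of \cite{Le}); the existence of such a $p$ will therefore break the lower estimate $\Lambda^+(p)\geq\gamma$ at $p$, exhibiting concretely that local connectedness cannot be dropped.

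Following the blueprint of \cite{RR}, I would begin with a classical expansive model on a Cantor space---for instance the full shift $\sigma$ on $\Sigma_2=\{0,1\}^{\Z}$ with two distinguished fixed points $\bar 0,\bar 1$ and a heteroclinic orbit $\{x_n=\sigma^n(x_0)\}_{n\in\Z}$ satisfying $x_n\to\bar 0$ as $n\to-\infty$ and $x_n\to\bar 1$ as $n\to+\infty$. Then replace each $x_n$ by an arc $J_n$ of diameter $\rho_n$, with $\rho_n$ decaying fast as $|n|\to\infty$, embedded so that the arcs accumulate on $\bar 0$ (resp.\ $\bar 1$) as $n\to-\infty$ (resp.\ $n\to+\infty$). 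The resulting space $M$ is compact and connected, since the arcs and Cantor points glue together into a continuum, but fails local connectedness at $\bar 0$ and $\bar 1$, because small neighbourhoods of $\bar 0$ contain infinitely many pairwise disjoint arcs together with Cantor points. Extend $\sigma$ to a homeomorphism $f:M\to M$ by mapping each $J_n$ onto $J_{n+1}$ affinely and leaving the surviving Cantor part invariant.

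The verification splits into three steps. First, one checks that $f$ remains expansive on $M$: distinct Cantor points separate under $\sigma$, points on the same arc separate under the affine expansion factor $\rho_{n+1}/\rho_n$ (to be chosen uniformly bounded below by some $k>1$), and points on disjoint arcs are already separated by a positive amount. Second, one shows that $\bar 0$ is Lyapunov stable: a neighbourhood basis at $\bar 0$ is given by sets consisting of $\bar 0$ together with nearby Cantor points and arcs $J_n$ for $n\ll 0$; the iterate $f^k$ sends these arcs to $J_{n+k}$, still of small diameter and still clustering at $\bar 0$, yielding a forward-invariant neighbourhood basis. Third, one verifies at $p=\bar 0$ that the conclusion of Theorem \ref{Peano} fails by showing that the ratios defining $A_\delta(\bar 0,n)$ stay bounded in $n$ along the stable arcs, whence $\Lambda^+(\bar 0)\leq 0<\gamma$.

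The main obstacle lies in the simultaneous calibration of the arc diameters $\rho_n$ and of the glueing metric. The $\rho_n$ must decay quickly enough for expansiveness to persist on the arcs, slowly enough that local connectedness genuinely fails at $\bar 0$ and that $\bar 0$ really becomes Lyapunov stable (and not merely merely asymptotically attracting along one side), and the extended $f$ must remain Lipschitz in a metric compatible with Fathi's Theorem \ref{Fathi}, so that Theorem \ref{Kifer} and Lemma \ref{previous} still apply and the invariants $\Lambda^+$, $\lambda^+$ are well-defined at $p$. Reconciling these requirements, and then reading off the precise value of $\Lambda^+(\bar 0)$ from the contracting structure of $f$ near $\bar 0$, will be the technical heart of the argument.
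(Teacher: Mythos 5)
Your overall strategy is the paper's strategy: exhibit a compact, connected, \emph{non}-locally-connected metric space carrying an expansive homeomorphism with a Lyapunov stable point, observing that local connectedness enters the proof of Theorem~\ref{Peano} only through Lewowicz's exclusion of stable points (\cite[Proposition 2.7]{Le}). (The paper does this by gluing a curve $\mathcal{H}$, a homeomorphic copy of the stable manifold $W^s(p)$ of a fixed point of the Anosov map $f_A$ on $\T^2$, asymptotically onto the torus.) However, your concrete construction has a genuine gap --- in fact two fatal ones. First, your space $M$ is not connected. The shift space $\Sigma_2$ is uncountable and totally disconnected, while you attach arcs only along the countable closure $\{x_n\}_{n\in\Z}\cup\{\bar 0,\bar 1\}$ of a single heteroclinic orbit. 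Any Cantor point $z$ outside this closure has a clopen (in $\Sigma_2$) neighborhood $W$ at positive distance $c$ from the orbit closure; since $\rho_n\to 0$, only finitely many arcs can approach $W$ within $c/2$, and each of these is a compact set not containing $z$, so after shrinking, $W$ is clopen in $M$. Hence $M$ splits, and your example violates the \emph{connectedness} hypothesis of Theorem~\ref{Peano}, not merely local connectedness, so it cannot witness that local connectedness is the indispensable hypothesis. The paper avoids this because $\mathcal{H}$ is dense: $X=\T^2\cup\mathcal{H}$ is the closure of the connected set $\mathcal{H}$, hence connected.

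Second, your $f$ is not expansive, and your calibration requirements contradict each other. You need $\rho_n\to 0$ as $|n|\to\infty$ (so the arcs accumulate on $\bar 0$ and $\bar 1$ and $M$ is compact), yet you also demand a uniform expansion factor $\rho_{n+1}/\rho_n\geq k>1$, which forces $\rho_n\geq k^n\rho_0\to\infty$ as $n\to+\infty$. Intrinsically the problem is unavoidable for a heteroclinic chain of compact arcs: two distinct points of $J_0$ are carried by $f^n$ into $J_n$ for every $n\in\Z$, so their distance at time $n$ is at most $\diam (J_n)=\rho_n$, which tends to $0$ in \emph{both} time directions and has a finite maximum; choosing the two points sufficiently close, no iterate ever separates them past the expansiveness constant. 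The paper's example escapes exactly this trap: $\mathcal{H}$ is a single noncompact curve (a copy of $\R$) whose ends do not converge to points but wrap densely around $\T^2$, and the backward dynamics on it expands without bound --- concretely, the separation of two points of $\mathcal{H}$ under $f^{-n}$ projects to $A^{-n}$ applied to a nonzero point of $W^s(p)\subset\T^2$, which must separate by expansiveness of $f_A$. Any repair of your construction would have to replace the chain of shrinking arcs by such a noncompact, densely accumulating curve, at which point you have essentially reconstructed the paper's example.
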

\begin{proof}
Let $f_A: \T^2 \to \T^2$ be the Anosov map in the two-torus $\T^2$ induced by the matrix
$
A =\left(
\begin{array}{ll}
2
& 3 \\
3
& 5
\end{array}
\right).
$
Let $p$ the fixed point of $f_A$ corresponding to the origin and $v_p,$ be an eigenvector associated to the smallest eigenvalue $\lambda=\frac{7-3\sqrt{5}}{2} < 1$ of $A$. Fix, for instance, $v_p=\left(1,\frac{1-\sqrt{5}}{2}\right)$.
Since the coordinates of $v_p$ are not rational numbers the natural projection of $\{t v_p, t \in \R\}$ into $\T^2$ is dense in $\T^2$ and corresponds to the stable manifold $W^s(p)$ of the fixed point $p$.

Identify $\R^2$ with the plane $Oxy$ and consider the point $q=(0,0,\epsilon)$ with $0<\epsilon<1$. Observe that $v_p=\left(1,\frac{1-\sqrt{5}}{2},0 \right)$ is parallel to $Oxy$.
Let $\gamma\subset \R^3$ be the curve given by the equation
$$\gamma(t)=tv_p+(0,0,\frac{\epsilon}{t^2+1}), \,\, t\in\R\, .$$
Then $\gamma$ is asymptotic to the straight line of $\R^3$ given by $\{(0,0,0)+tv_p,\,t\in\R\}\subset Oxy$.
Define an extension $\widetilde A$ of $A$ to $Oxy\cup \gamma$ in the following way: for points $(x,y,0)\in Oxy$ we define $\widetilde A (x,y,0)=A(x,y)$ and for $\gamma$ we define $\widetilde A (\gamma(t))=\lambda tv_p+(0,0,\frac{\epsilon}{(\lambda t)^2+1}) \, t\in\R\, .$
Then $\widetilde A$ has the inverse $A^{-1}$ for points $(x,y,0)$ and for points in $\gamma$ given by
$$\widetilde A^{-1} (\gamma(t))=\frac{t}{\lambda}v_p+\left(0,0,\frac{\epsilon}{\left(\frac{t}{\lambda}\right)^2+1}\right) \, t\in \R\, .$$
Observe that $\gamma(0)=q=\widetilde A(\gamma(0))$.
On its turn, factoring out the integer lattice
$\Z \times \Z\times \{0\}$ in $\R^3$, we get a homeomorphism $f: \T^2 \cup \mathcal{H} \to T^2 \cup \mathcal{H}$ where $\mathcal{H}$ is the image of $\gamma$ on the quotient space.
As $\mathcal{H}$ is a copy of $W^{s}(p)$ and for $t\to \infty$ the distance of $\gamma(t)$ to $Oxy$ goes to 0,
$\mathcal{H}$ is a curve asymptotic to $\T^2$, we obtain that
$X=\T^2 \cup \mathcal{H}$ is compact and connected.
We then define a dynamics in $X$ in the following way: in $\T^2$ is the dynamics induced by $A$ and in
$\mathcal{H}$ is the dynamics of $W^{s}(p)$. It turns out that this dynamics in $X$ is expansive.
But the points in $\mathcal{H}$ are stable. In particular, so is the point $q$, implying that
 $q$ has a unique Lyapunov exponent (as it occurs for any point of $\mathcal{H}$), which is strictly less than zero, finishing the proof.
\end{proof}

%
%
%
%

\section{ Compact invariant subsets.}
In this section we extend the definition of Lyapunov exponents 
for compact $f$-invariant sets  of a homeomorphism defined on a Peano space. The goal is to proof that if the maximal Lyapunov exponent of $K$, a compact invariant set, is strictly negative then $K$ is an attractor.

Let $M$ be a (non trivial) compact Peano space and $f:M\to M$ a homeomorphism.
For $A\subset M$, $A\neq\emptyset$, and $x\in M$ we define $\dist(x,A)=\inf\{\dist(x,y)\,:\, y\in A\}$.

\noindent Let $K\subset M$ be a compact $f$-invariant subset of $M$, i.e., $f(K)= K$.
For $N\in\N$ define $$B^*_K(\delta,N)=\{y\in M\setminus K\,:\, \dist(f^j(y),K)\leq\delta,\, \forall\, j=0,1,\ldots,N\}\, .$$
If $N<0$ define $B^*_K(\delta,N)=\{y\in M\setminus K\,:\, \dist(f^j(y),K)\leq\delta,\, \forall\, j=N,N+1,\ldots,-1,0\}$.
For $n\in\Z$ and $\delta>0$ let us define
$$A_\delta(K,n)=\sup_{y\in B^*_K(\delta,n)} \left\{\frac{\dist(K,f^n(y))}{\dist(K,y)}\right\}$$
and
$$a_\delta(K,n)=\inf_{y\in B^*_K(\delta,n)} \left\{\frac{\dist(K,f^n(y))}{\dist(K,y)}\right\}\, .$$
Let us also define
$$\Lambda^+_\delta(K)=\limsup_{n\to+\infty}\frac{1}{n}\log (A_\delta(K,n))\; \mbox{ and }\;
\lambda^+_\delta(K)=\limsup_{n\to+\infty}\frac{1}{n}\log (a_\delta(K,n))$$
and for $n<0$
$$\Lambda^-_\delta(K)=-\limsup_{n\to-\infty}\frac{1}{n}\log (A_\delta(K,n))\; \mbox{ and }\;
\lambda^-_\delta(K)=-\limsup_{n\to-\infty}\frac{1}{n}\log (a_\delta(K,n))\, .$$

Since $f^n(K)=K$ and $B^*_K(\delta,n+k)\subset B^*_K(\delta,n)$, if $k\geq 0$, it holds that
$$A_\delta(K,n+k)=\sup_{y\in B^*_K(\delta,n+k)} \{\dist(K,f^{n+k}(y))/d(K,y)\}= $$
$$\sup_{y\in B^*_K(\delta,n+k)} \left\{\left(\frac{\dist(K,f^{n}(y))}{d(K,y)}\right)\cdot\left(\frac{\dist(K,f^{n+k}(y))}{d(K,f^n(y))}\right)\right\}\leq$$
$$\sup_{y\in B^*_K(\delta,n)}\! \{\dist(K,f^n(y))/d(K,y)\}\cdot \!\!\!\!\sup_{z\in B^*_{K}(\delta,k)} \!\{\dist(K,f^k(z))/d(K,z)\}=$$
$$=A_\delta(K,n)\cdot A_\delta(K,k). $$
Therefore letting $Y(\delta,K,n)=\log (A_\delta(K,n))$ we obtain a subadditive function and there is the limit $\Lambda^+(K,\delta)$ of
$\frac{1}{n}\log (A_\delta(K,n))$ for $n\to+\infty$.
Since $\log (A_\delta(K,n))$ is monotone in $\delta$ there exists $\Lambda^+(K)=\lim_{\delta\to 0}\Lambda^+(K,\delta)$.
Similarly there exist the limits $\lambda^+(K)= \lim_{\delta\to 0} \lambda^+_\delta(K)$, $\Lambda^-(K)=\lim_{\delta\to 0}\Lambda^-(K,\delta)$ and $\lambda^-(K)=\lim_{\delta\to 0} \lambda^-_\delta(K)$.
Thus, we introduce the definition below.
\begin{Df}
Let $f: M\to M$ be an expansive homeomorphism defined on a Peano space $M$. Given a compact, $f$-invariant set $K\subset M$, we define the Lyapunov exponents of $K$ by
$$\Lambda^+(K)=\lim_{\delta\to 0}\Lambda^+(K,\delta),\,\; \lambda^+(K)=\lim_{\delta\to 0}\lambda^+_\delta(K,\delta)$$
and similarly for $\Lambda^-(K)$ and $\lambda^-(K)$. 
\end{Df}

As for the case of a point $x\in M$ it can be proved that

\begin{Teo}
$\Lambda^+_\delta(K)=-\lambda^-_\delta(K)$ and $\lambda^+_\delta(K)=-\Lambda^-_\delta(K)$.
\end{Teo}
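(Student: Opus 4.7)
The plan is to mimic, in the setting of a compact invariant set $K$, the duality identity already used in Lemma \ref{previous} for orbits of points, namely $a_\delta(x,n)=1/A_\delta(f^n(x),-n)$. Here $f^n$ plays the role that $f^n$ played for a single point, exploiting that $f(K)=K$ so that $f$ induces a bijection between the relevant one-sided neighborhoods of $K$.

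First I would verify that, for any $n>0$, the map $f^{-n}$ restricts to a bijection
\[
f^{-n}:B^*_K(\delta,-n)\longrightarrow B^*_K(\delta,n).
\]
Indeed, if $y\in B^*_K(\delta,-n)$ then $\mathrm{dist}(f^j(y),K)\le\delta$ for $j=-n,\dots,0$, so writing $z=f^{-n}(y)$ we get $\mathrm{dist}(f^{j+n}(z),K)\le\delta$ for $j+n=0,\dots,n$; the inverse map is $f^{n}$, and $z\notin K$ iff $y\notin K$ since $f(K)=K$.

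Next, I would change variables $z=f^{-n}(y)$ in the definition of $a_\delta(K,-n)$:
\[
a_\delta(K,-n)=\inf_{y\in B^*_K(\delta,-n)}\frac{\mathrm{dist}(K,f^{-n}(y))}{\mathrm{dist}(K,y)}=\inf_{z\in B^*_K(\delta,n)}\frac{\mathrm{dist}(K,z)}{\mathrm{dist}(K,f^n(z))}=\frac{1}{A_\delta(K,n)}.
\]
Taking $\log$ yields $\log a_\delta(K,-n)=-\log A_\delta(K,n)$. The same argument with $A$ and $a$ interchanged gives $\log A_\delta(K,-n)=-\log a_\delta(K,n)$.

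Finally I would feed these identities into the definitions of $\Lambda^\pm_\delta(K)$ and $\lambda^\pm_\delta(K)$. Setting $n=-m$ with $m\to+\infty$,
\[
\lambda^-_\delta(K)=-\limsup_{n\to-\infty}\tfrac{1}{n}\log a_\delta(K,n)=\liminf_{m\to+\infty}\tfrac{1}{m}\log a_\delta(K,-m)=-\limsup_{m\to+\infty}\tfrac{1}{m}\log A_\delta(K,m)=-\Lambda^+_\delta(K),
\]
which gives $\Lambda^+_\delta(K)=-\lambda^-_\delta(K)$. The identity $\lambda^+_\delta(K)=-\Lambda^-_\delta(K)$ follows symmetrically from $\log A_\delta(K,-n)=-\log a_\delta(K,n)$. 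I do not anticipate a serious obstacle; the only subtle point is keeping track of signs when converting the $\limsup$ at $-\infty$ into a $\liminf$ at $+\infty$, which is why the definitions include the leading minus sign for $\Lambda^-_\delta$ and $\lambda^-_\delta$.
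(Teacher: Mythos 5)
Your proposal is correct and follows essentially the same route as the paper: both rest on the change of variables $z=f^{\mp n}(y)$ between $B^*_K(\delta,-n)$ and $B^*_K(\delta,n)$, yielding the duality $a_\delta(K,\cdot)=A_\delta^{-1}(K,-\cdot)$, which is then substituted into the definitions of the exponents. Your write-up is if anything slightly more careful than the paper's (explicit bijection check, explicit $\limsup$/$\liminf$ sign bookkeeping), but the underlying argument is identical.
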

\begin{proof}
Indeed, we have that
$$a_\delta(K,n)=\inf_{y\in B^*_K(\delta,n)} \left\{\frac{\dist(K,f^n(y))}{\dist(K,y)}\right\}=$$
$$\left(\sup_{ y\in B^*_K(\delta,n)}\left\{\frac{\dist(K,y)}{\dist(K,f^n(y))}\right\}\right)^{-1}=$$
$$\left(\sup_{ y\in B^*_K(\delta,-n)}\left\{\frac{\dist(K,f^{-n}(y))}{\dist(K,y)}\right\}\right)^{-1}=A_\delta^{-1}(K,-n)\, .$$
Therefore $\lambda^+_\delta(K)=\lim_{n\to+\infty}\frac{1}{n}\log (a_\delta(K,n))=
-\lim_{n\to-\infty}\frac{1}{|n|}\log(A_\delta(K,-n))=-\Lambda^-_\delta(K)$.
Similarly it can be proved that $\Lambda^+_\delta(K)=-\lambda^-_\delta(K)$.
\end{proof}


Given a compact invariant set $K\subset M$, we say that $K$ is an {\em{attractor}} if
there is a neighborhood $U$ of $K$ such that if $y\in U$ then
$\lim_{n\to +\infty}\dist(f^n(y),K)=0$. Analously,  $K$ is a {\em{repeller }} if
$y\in U$ then
$\lim_{n\to -\infty}\dist(f^n(y),K)=0$.

\begin{Teo}
Let $M$ be a compact Peano space and $K \subset M$ be an invariant compact set. If $\Lambda^+(K)<0$ then $K$ is an attractor.
Analogously if $\lambda^-(K)>0$ then $K$ is a repeller.
\end{Teo}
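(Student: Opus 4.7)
The plan is to extract from the hypothesis $\Lambda^+(K)<0$ an exponentially contracting iterate $f^{N_0}$ on a suitably small neighborhood of $K$, and then run a standard trapping-and-iteration argument. The crucial technical step is to ensure that a small enough neighborhood of $K$ is contained in $B^*_K(\delta_0,N_0)$ for some $\delta_0$, so that the contraction estimate encoded in $A_{\delta_0}(K,N_0)$ can be applied and then iterated.

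First I would note that $A_\delta(K,n)$ is non-decreasing in $\delta$ (a larger $\delta$ gives a larger set $B^*_K(\delta,n)$, hence a larger supremum), so $\delta\mapsto\Lambda^+(K,\delta)$ is non-decreasing with $\inf_{\delta>0}\Lambda^+(K,\delta)=\Lambda^+(K)<0$. I therefore fix $\delta_0>0$ with $\alpha:=\Lambda^+(K,\delta_0)<0$. The subadditivity of $n\mapsto\log A_{\delta_0}(K,n)$, observed earlier in the section, guarantees that the limit $\Lambda^+(K,\delta_0)=\lim_{n\to\infty}\tfrac{1}{n}\log A_{\delta_0}(K,n)=\alpha$ is a genuine limit, so I can pick $N_0\in\N$ with $A_{\delta_0}(K,N_0)\leq c$ for some $c\in(0,1)$ (e.g.\ $c=e^{N_0\alpha/2}$).

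Next, by uniform continuity of the finite family $f,f^2,\ldots,f^{N_0}$ together with the compactness of $K$, I choose $\eta_0\in(0,\delta_0)$ so that $\dist(y,K)<\eta_0$ forces $\dist(f^j(y),K)<\delta_0$ for every $j\in\{0,\ldots,N_0\}$. Setting $U=\{y\in M:\dist(y,K)<\eta_0\}$, every $y\in U\setminus K$ lies in $B^*_K(\delta_0,N_0)$, and the defining estimate for $A_{\delta_0}(K,N_0)$ yields $\dist(f^{N_0}(y),K)\leq c\,\dist(y,K)<c\eta_0<\eta_0$, so $f^{N_0}(U\setminus K)\subset U\setminus K$. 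Iterating, $\dist(f^{kN_0}(y),K)\leq c^k\,\dist(y,K)\to 0$ as $k\to\infty$, and the intermediate iterates $f^{kN_0+j}(y)$ with $0\leq j<N_0$ are controlled by uniform continuity of each $f^j$ in a neighborhood of the compact set $K$. Consequently $\dist(f^n(y),K)\to 0$ for every $y\in U$, showing that $K$ is an attractor with basin containing $U$.

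The main obstacle is the coordinated choice of the three parameters: $\delta_0$ comes from the $\delta\to 0$ limit defining $\Lambda^+(K)$, $c$ from the exponential-decay statement for $A_{\delta_0}(K,n)$, and $\eta_0$ from the uniform continuity needed to trap the orbit inside $B^*_K(\delta_0,N_0)$ long enough to apply the contraction; once they are picked in this order, the remaining induction is routine. The repeller statement is obtained by applying the same argument to $f^{-1}$ in place of $f$.
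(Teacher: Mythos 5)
Your proposal is correct and follows essentially the same strategy as the paper's proof: fix $\delta_0$ with $\Lambda^+(K,\delta_0)<0$, extract an exponential bound on $A_{\delta_0}(K,\cdot)$, shrink to a smaller neighborhood $U$ via uniform continuity so that orbits starting in $U$ remain in $B^*_K(\delta_0,\cdot)$, and then iterate the contraction. Your write-up is in fact slightly more explicit than the paper's (which compresses the block iteration and the control of intermediate iterates into the phrase ``we can apply induction''), but the decomposition and the key estimates are the same.
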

\begin{proof}
Since  $\Lambda^+(K)=\lim_{\delta\to 0}\Lambda^+(K,\delta)<0$ there is $\delta_0>0$ such that for all $0<\delta\leq \delta_0$, $\Lambda^+(K,\delta)<\frac{2}{3}\Lambda^+(K)<0$. Since $\lim_{n\to+\infty}\frac{1}{n}\log (A_\delta(K,n))=\Lambda^+(K,\delta)$ there is $n_0\in\N$
such that for all $n\geq n_0=n_0(\delta_0)$, $\frac{1}{n}\log (A_{\delta_0}(K,n))<\frac{1}{2}\Lambda^+(K)<0$.
{\it A fortiori,} for all $n\geq n_0$ and $0<\delta\leq\delta_0$, $\frac{1}{n}\log (A_{\delta}(K,n))<\frac{1}{2}\Lambda^+(K)$ too.
Let us denote $-\gamma=\frac{1}{2}\Lambda^+(K)$.
Choose $\delta_0>\delta_1>0$ such that if $\dist(y,K)<\delta_1$ then $\dist(f^j(y),K)<\delta_0$ for all $j=0,1,2,\ldots, n_0$.
Finally let $U=\{y\in M\,:\, \dist(y,K)<\delta_1\}$.
If $y\in U$, since
$$\frac{1}{n}\log (A_\delta(K,n))=\frac{1}{n}\log \left(\sup_{y\in B^*_K(\delta,n)} \left\{\frac{\dist(K,f^n(y))}{\dist(K,y)}\right\}\right)< -\gamma $$
 we have that $\frac{\dist(K,f^n(y))}{\dist(K,y)}<e^{-\gamma n}$. But $\dist(y,K)<\delta_1<\delta_0$ and so
 $\dist(K,f^n(y))<e^{-\gamma n}\,\delta_0<\delta_0$ for all $n\geq n_0$ and we can apply induction. Thus $\dist(K,f^n(y))$ tends to zero when $n\to+\infty$ and $K$ is an attractor.

 The proof that $\lambda^-(K)>0$ implies that $K$ is a repeller is similar.

\end{proof}

\begin{tabbing}
Universidade Federal do Rio de Janeiro, \hspace{1cm}\= Facultad de Ingenieria, \kill
 M. J. Pacifico, \> J. L. Vieitez, \\
Instituto de Matematica, \> Departamento de Matematica,\\
Universidade Federal do Rio de Janeiro, \> CenUR Litoral Norte,\\
C. P. 68.530, CEP 21.945-970, \> Universidad de la Republica,\\
Rio de Janeiro, R. J. , Brazil. \> Rivera 1350 , CP 50000,\\
                                \> Salto, Uruguay\\
{\it pacifico@im.ufrj.br} \> {\it jvieitez@unorte.edu.uy}
\end{tabbing}

\end{document}